\theoremstyle{plain}
\newtheorem{theorem}{Theorem}
\newtheorem{lemma}{Lemma}
\newtheorem{proposition}{Proposition}
\newtheorem{corollary}{Corollary}
\newtheorem{definition}{Definition}
\theoremstyle{definition}
\newtheorem{example}{Example}
\def\Aut{\textrm{\textup{Aut}}}
\def\Cay{\textrm{\textup{Cay}}}
\begin{document}
\newgeometry{tmargin=3cm, bmargin=3cm, lmargin=2.5cm, rmargin=2.5cm}
\pagestyle{empty}

\begin{center}
\begin{large}
\textbf{The Girth of Cayley graphs of Sylow 2-subgroups of~symmetric~groups~$S_{2^n}$ on diagonal bases}\\
\end{large}
Bartłomiej Pawlik\\
\end{center}

\begin{small}
\textbf{Keywords.} Sylow $p$-subgroup $\cdot$  Wreath product $\cdot$ Cayley graph $\cdot$ Girth of graph

\bigskip

\textbf{Abstract.} A diagonal base of a Sylow 2-subgroup $P_n(2)$ of symmetric group $S_{2^n}$ is a minimal generating set of this subgroup consisting of elements with only one non-zero coordinate in the~polynomial representation. For different diagonal bases Cayley graphs of $P_n(2)$ may have different girths (i.e.~minimal lengths of cycles) and thus be non-isomorphic. In presented paper all possible values of girths of Cayley graphs of $P_n(2)$ on diagonal bases are calculated. A criterion for whenever such Cayley graph has girth equal to 4 is presented. A lower bound for the number of different non-isomorphic Cayley graphs of $P_n(2)$ on diagonal bases is proposed.
\end{small}

\section{Introduction.}

%Throughout in this paper by graph we mean finite graph without loops or multiedges. One of the fundamental problems in graph theory is so called isomorphism problem for graphs, that is to decide whether or not two given graphs are isomorphic. Thus in the group theory naturally arised a problem of isomorphism of Cayley graphs, which is now widely investigated (see e.g. \cite{kon2008} and for the survey - \cite{li2002}). Some properties of Cayley graphs of symmetric groups were presented in \cite{fen2006} and \cite{bam2014}. Cayley graphs of Sylow $p$-subgroups of symmetric groups for $p\geq3$ were investigated in \cite{sus2009}. Some results in case $p=2$ are obtained in \cite{paw2016}.

\bigskip

Let $P_n(p)$ be a Sylow $p$-subgroup of symmetric group $S_{p^n}$.  A minimal generating set of group G, i.e. a generating set from which none of the elements can be removed, is called \textit{a base} of group G. All bases of $P_n(p)$ contain exactly $n$ elements (see \cite{paw2016}, \cite{sus2009}).

It is known that given a generating set $S$ of group $G$, the Cayley graph $\Cay(G,S)$ of $G$ with respect to $S$ is a simple directed graph with vertex set $V(\Cay(G,S)) = G$ and the set of edges
$$E(\Cay(G,S))=\{(uv)|\,\exists(s\in S):\,v=us\}.$$
If additionally $S$ is symmetric (i.e. $S=S^{-1}$) then $\Cay(G,S)$ may be considered undirected. For two different generating sets $S$ and $S'$ of group $G$, the resulting Cayley graphs $\Cay(G,S)$ and $\Cay(G,S')$ may differ and be non-isomorphic. Thus a fundamental question in the~study of Cayley graphs of groups is the graph isomorphism problem. It has been widely studied by many authors (see e.g. \cite{ bab1978, dob1995, kon2008, sus2009}, and for the survey - \cite{li2002}). On approach to address the~graph isomorphism problem is to study graph features, which are invariant to isomorphisms, e.g. diameter, chromatic number, clique number, independence number (see e.g. \cite{bam2014, gan2013, gre2017, klo2007}) or girth (\cite{gam2009, loz2011}). Let us recall the result of \cite{gam2009}] which states that the girth $g_n$ of random Cayley graphs of Sylow $p$-subgroups $P_n(p)$ of group $S_{p^n}$ satisfies 
$$(1-o(1))\beta\log\log|P_n(p)|\leq g_n\leq(1+o(1))(\log|P_n(p)|)^\alpha,$$
with $n\rightarrow\infty$, where $\alpha<1$ is a constant depending on $p$ only, and $\beta$ depends on $p$ and $d$ (degree of regularity of given graph). This paper presents an example of family of Cayley graphs of Sylow 2-subgroups $P_n(2)$ of group $S_{2^n}$ which, for all $n$, have a girth at most 8.

In this paper we investigate Cayley graphs of $P_n(p)$ defined on the generating sets of certain type, the so called diagonal beses (see \cite{paw2016, sus2009}). Since every diagonal base consists of involutions, the respective Cayley graphs are simple and undirected.

 It is known that group $P_n(2)$ is isomorphic to the group $\Aut(T_2^n)$ of automorphisms of a~rooted binary tree~$T^n_2$ of height $n$ (see e.g. \cite{bie2015}). The base $D=\{D_1,\ldots,D_n\}$ of $P_n(2)$ such that element $D_i$ for $i=1,\ldots,n,$ acts only on the $i$-th level of $T^n_2$ is called a \textit{diagonal base}. \textit{The girth} of given graph $\Gamma$ is the length of the minimal cycle of this graph. The Cayley graph $\Cay(P_n,D)$ of group $P_2(2)$ on any diagonal base $D$ is a $C_8$ cycle and it is isomorphic to an induced subgraph of any Cayley graph of $P_n(2)$ on diagonal base $D$ for $n>2$ (see \cite{paw2016}). Thus every graph $\Cay(P_n(2),D)$, where $D$ is a~diagonal base and $n\geq2$, contains a~cycle of length~8, however, not necessarily it is minimal. For given $n\geq3$ there exist Cayley graphs of $P_n(2)$ on diagonal bases which have smaller minimal cycles, and thus not every two such Cayley graphs of $P_n(2)$ are isomorphic. In this paper we classify Cayley graphs of $P_n(2)$ on diagonal bases with respect to their girths in the following
 
\begin{theorem}[Main Theorem]\label{maintheorem}
 Let $D=\{D_1,\ldots,D_n\}$ be a diagonal base of $P_n(2)$.
 \begin{enumerate}
 \item The girth of $\Cay(P_n(2),D)$ is either 4, 6 or 8.
 \item For $n\geq3$ the girth of $\Cay(P_n(2),D)$ is equal to 4 if and only if $D$ contains two commuting elements, i.e. $D$ satisfies condition $(\star)$ stated in Section~3.  
 \item For all $n\geq2$ there exists $D$ such that the girth of $\Cay(P_n(2),D)$ is equal to 8.
 \item For all $n\geq5$ there exists $D$ such that the girth of $\Cay(P_n(2),D)$ is equal to 6. For $n<5$ there are no such diagonal bases.
 \end{enumerate}
 \end{theorem}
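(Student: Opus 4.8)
My plan is to reduce the presence of a $6$-cycle to one explicit relation among three generators and then to analyse that relation level by level. I use the coordinate (polynomial) representation, in which $D_i$ acts on the leaves $\mathbb{F}_2^n$ of $T_2^n$ by $x_i\mapsto x_i+f_i(x_1,\dots,x_{i-1})$, where $f_i$ is the single non-zero coordinate of $D_i$; minimality of a base forces every $f_i$ to have odd weight, and for $i<j$ the generators $D_i,D_j$ commute iff $f_j$ is invariant under the substitution $\sigma_i\colon x_i\mapsto x_i+f_i$. Because each $D_i$ is an involution and $P_n(2)$ is a $2$-group, $\Cay(P_n(2),D)$ is bipartite, hence of even girth, a $4$-cycle corresponds exactly to a commuting pair (condition $(\star)$), and a $6$-cycle is a reduced relator of length $6$. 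The first step is that such a relator cannot use only two generators — that would force an element of order $3$ — so it uses exactly three generators, each twice; running through the finitely many cyclic patterns shows that, for the three indices $\{p,q,r\}$, it is either a square relation $(D_aD_bD_c)^2=1$ or a commutator relation $[D_aD_b,D_c]=1$, in either case with $\{a,b,c\}=\{p,q,r\}$.

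The second step is to single out the one relation compatible with $(\star)$ failing. A direct coordinatewise computation (ordering the factors by depth) shows that $(D_aD_bD_c)^2=1$, and every commutator relation $[D_aD_b,D_c]=1$ with $c\neq\max\{p,q,r\}$, forces two of the three generators to commute; thus under $(\star)$-failure only $[D_pD_q,D_r]=1$ with $p<q<r$ can produce a $6$-cycle. Expanding this relation coordinatewise turns it into an invariance condition, which I will call $(\ast)$:
\[
 f_r=f_r\circ\tau_{pq},\qquad \tau_{pq}\colon\ x_p\mapsto x_p+f_p,\ \ x_q\mapsto x_q+f_q,
\]
to be combined with the non-commutativity requirements that $f_r$ be fixed by neither $\sigma_p$ nor $\sigma_q$ and that $f_q$ be not fixed by $\sigma_p$. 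Consequently a base has girth $6$ iff it has no commuting pair and some triple $p<q<r$ satisfies $(\ast)$ together with these three non-invariances (non-degeneracy of the resulting hexagon being automatic once there is no commuting pair).

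For $n\ge5$ I would realise $(\ast)$ explicitly. A convenient triple is $(p,q,r)=(2,4,5)$: take $f_2=x_1$, arrange $f_4(1,x_2,x_3)$ to depend on $x_2$ when $x_3=0$ but to equal $1$ when $x_3=1$, and let $f_5$ be an odd-weight function that is constant on the $\tau_{24}$-orbits and separates the two transpositions sitting in the slice $x_1=1,\ x_3=1$. One checks that $f_5$ is then fixed by $\tau_{24}$ but by neither $\sigma_2$ nor $\sigma_4$, so $[D_2D_4,D_5]=1$ while $D_2,D_4,D_5$ are pairwise non-commuting. It then remains to choose $f_1=1$, a suitable $f_3$, and the still-free values of $f_5$ so that all ten pairs are non-commuting, a finite verification for which there is ample freedom. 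For $n>5$ I keep $f_1,\dots,f_5$ and adjoin odd-weight functions $f_6,\dots,f_n$ depending on enough variables that no new pair commutes; since $[D_2D_4,D_5]=1$ involves only the first five coordinates it persists in $P_n(2)$, the hexagon survives, and no shorter cycle can appear without a commuting pair.

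For $n\le4$ I would assume a base with no commuting pair and show that no triple $p<q<r\le4$ satisfies $(\ast)$ with the three non-invariances. If $p=1$ then, since $f_1\equiv1$, the map $\tau_{1q}$ flips $x_1$ and so has no fixed point, whereas an odd-weight $f_r$ cannot be constant on fixed-point-free orbits; this eliminates every triple containing $D_1$. The only survivor is $(2,3,4)$, and here is the heart of the matter: non-commutativity of $D_2,D_3$ forces $f_3$ to be non-invariant under $\sigma_2$, which makes $\tau_{23}$ act as a single $4$-cycle on the slice $x_1=c_0$ where $f_2=1$; invariance then forces $f_4$ to be constant on that entire slice, hence $\sigma_2$-invariant, so $D_2,D_4$ commute — a contradiction. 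This same slice picture pinpoints the threshold: to avoid the trap one needs two distinct slices on which $f_p=1$, one carrying a $4$-cycle (keeping $D_p,D_q$ non-commuting) and one carrying transpositions (keeping $f_r$ non-$\sigma_p$-invariant), and separating these requires a coordinate strictly between $p$ and $q$, forcing $q\ge p+2\ge4$ and therefore $r\ge5$. The main obstacle is exactly this joint tension: the relation $(\ast)$ pulls towards commutativity while global girth-$6$ demands non-commutativity everywhere, and proving that the two are compatible precisely when a fifth level exists is where the real work lies.
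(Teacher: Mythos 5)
Your core reduction agrees with the paper's: even girth, $4$-cycles exactly from commuting pairs, and the only length-$6$ relator compatible with no commuting pair being $D_kD_jD_iD_kD_iD_j=0$, i.e.\ $[D_jD_i,D_k]=1$ with $k$ maximal (the other patterns project, by killing the top generator, onto $(D_xD_y)^2=0$). Your non-existence argument for $n\le 4$ is in fact cleaner than the paper's: observing that $\tau_{1q}$ is fixed-point-free on the domain of $f_r$ while an odd-weight $f_r$ cannot be constant on even orbits replaces the explicit polynomial computation of Lemma~3, and your slice analysis of the triple $(2,3,4)$ supplies an argument where the paper only asserts ``direct calculations show''. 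But there are genuine gaps. First, parts 1 and 3 of the theorem are simply not proved: nothing in your proposal establishes that the girth is at most $8$ (the paper gets this from the reduced relation $(D_1D_2)^4=0$, which holds for every diagonal base and yields the embedded $C_8$ from $P_2(2)$), and you exhibit no girth-$8$ base for general $n$ (the paper's Proposition~4 verifies $[D_i]_i=\overline{x_{i-1}}$). Without the upper bound the trichotomy $\{4,6,8\}$ is open. Second, your justification of bipartiteness --- ``each $D_i$ is an involution and $P_n(2)$ is a $2$-group'' --- is false as stated: $\mathbb{Z}_2\times\mathbb{Z}_2$ generated by its three involutions gives $K_4$, of girth $3$. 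What saves the claim is minimality: the generators map to a basis of the Frattini quotient, so the sum-of-coordinates map onto $\mathbb{Z}_2$ sends every generator to $1$; equivalently, the paper's leading-monomial (odd-weight) argument. You have the odd-weight fact in hand but attach the conclusion to the wrong hypothesis.

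Third, the existence half of part 4 is only a plan, and its weakest point is the inductive step. For $n=5$ you list constraints on $f_2,f_4,f_5$ but never write down and verify a base (the paper does, explicitly). For $n>5$, ``adjoin odd-weight functions depending on enough variables so that no new pair commutes'' is not automatic: whether the new top generator commutes with $D_i$ depends on the parity of the number of monomials of $[D_i]_i$, which is exactly why the paper's Proposition~2 must take $[\overline{D}_{n+1}]_{n+1}=\overline{x_n}+\sum_{j}x_{e_j}$ with the sum running precisely over the even-monomial generators; a careless choice creates a commuting pair and collapses the girth to $4$. Finally, the closing ``threshold'' heuristic ($q\ge p+2$, hence $r\ge5$) should not be presented as part of the proof --- your case analysis for $n\le4$ already does that job, and the heuristic proves nothing about $n\ge5$ without an explicit witness.
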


 By the result of \cite{paw2016} the upper bound for the number of non-isomorphic Cayley graphs of $P_n(2)$ is equal to $\displaystyle2^{2^n-2n}$. We apply point 2 of Theorem \ref{maintheorem} to obtain a lower bound: we show that the~number of non-isomorphic Cayley graphs of $P_n(2)$ on diagonal bases is at least $\displaystyle2^{n-2}$.
 
The paper is organized as follows. In Section 2 we recall some basic facts about groups $P_n(2)$, their polynomial representation and diagonal bases. Also in this Section we recall some basic facts and properties of Cayley graphs. In Section 3 we prove the Main Theorem. Application of the Main Theorem to estimate the lower bound of the number of non-isomorphic Cayley graphs of $P_n(2)$ on diagonal bases is proposed in Section 4.

\section{Preliminaries.}

Let $X_n$ be a sequence of variables $x_1,\ldots,x_n$.

It is well known (see e.g. \cite{kal1948}) that the Sylow $p$-subgroups of group $S_{p^n}$ are isomorphic with an~$n$-iterated wreath product of cyclic groups $C_p$, namely
$$P_n(p)\cong \underbrace{C_p\wr C_p\wr\ldots\wr C_p}_{n}.$$
Group $C_p$ is isomorphic to the additive group $\mathbb{Z}_p$, thus every element $f\in P_n(p)$ may be written as
\begin{equation}\label{eq:f}
f=[f_1,f_2(X_1),f_3(X_2),\ldots,f_n(X_{n-1})],
\end{equation}
where $f_1\in\mathbb{Z}_p$ and $f_i:\mathbb{Z}_p^{i-1}\rightarrow\mathbb{Z}_p$ for $i=2,\ldots,n$ are reduced polynomials from the quotient ring $\mathbb{Z}_p[X_i]/\langle x_1^p-x_1,\ldots,x_i^p-x_i\rangle$. We call such element $f$ a \textit{tableau}.

For tableaux $f,g\in P_n(p)$, where $f$ has form (\ref{eq:f}) and $$g=[g_1,g_2(X_1),g_3(X_2),\ldots,g_n(X_{n-1})]$$ the product $fg$ has the form
\begin{align*}
fg=\big[&f_1+g_1,\,f_2(X_1)+g_2(x_1+f_1),\,\ldots,\\
&f_n(X_{n-1})+g_n(x_1+f_1,x_2+f_2(X_1),\ldots,x_{n-1}+f_{n-1}(X_{n-2}))\big],
\end{align*}
and the inverse $f^{-1}$ of an element $f$ has the form
\begin{align*}
f^{-1}=\Big[&-f_1,-f_2(x_1-f_1),\ldots,\\
&-f_n\big(x_1-f_1,x_2-f_2(x_1-f_1),\ldots,x_{n-1}-f_{n-1}(x_1-f_1,\ldots)\big)\Big].
\end{align*}

Let $[f]_i$ be the $i$-th coordinate of tableau $f$, i.e. $[f]_1=f_1$ and $[f]_i=f_i(X_{i-1})$ for $i=2,\ldots,n$, and $(f)_i$ be a tableau consisting of first $i$ coordinates of $f$, i.e. $(f)_i=[f_1,\ldots,f_i(X_{i-1})]$.

\smallskip

Further in this paper tableau $[0,\ldots,0]$ will be simply denoted as $0$.

\bigskip

Let $\overline{x_n}=x_1\cdot x_2\cdot\ldots\cdot x_n$ and $$\overline{x_n}/x_i=x_1\cdot x_2\cdot\ldots\cdot x_{i-1}\cdot x_{i+1}\cdot\ldots\cdot x_n.$$

\smallskip

A diagonal base of $P_n(2)$, defined in the Introduction consists of these and only these automorphisms of $T_2^n$, whose action is nontrivial on a unique level of the tree. In terms of  polynomial representation, diagonal bases may be defined as following.

\begin{definition}
Base $D=\{D_1,\ldots,D_n\}$ of $P_n(2)$ is called diagonal if $[D_i]_j=0$ for any $i$ ($1\leq i\leq n$) and $j\neq i$.
\end{definition}

It is known (see e.g. \cite{kal1948}) that in every base $D$ of $P_n(2)$ for every $i$ there exists a tableau $D'\in D$ which contains a monomial $\overline{x_{i-1}}$ on $i$-th coordinate. Thus, the nonzero coordinates of elements of diagonal base $D=\{D_1,\ldots,D_n\}$ have form $[D_1]_1=1$ and $[D_i]_i=d_i(X_{i-1})$, where $d_i$ contains monomial $\overline{x_{i-1}}$ for every $i=2,\ldots,n$.

We also note that every element of diagonal base $D=\{D_1,\ldots,D_n\}$ is an involution, i.e. $D_i^2=0$ for $i=1,\ldots,n$.

\bigskip

\bigskip

For any graph $\Gamma$ let $V(\Gamma)$ be a vertex set of $\Gamma$ and $E(\Gamma)$ be the set of edges of $\Gamma$.

\bigskip

Let $S=\{s_1,\ldots,s_n\}$ be a generating set of a finite group $G$. The Cayley graph $\Cay(G,S)$ contains a cycle if and only if for some $g\in G$ there exists an irreducible product $s_{i_1}s_{i_2}\ldots s_{i_k}$, where $s_{i_j}\in S$ for every $j=1,\ldots,k$, such that 
$$gs_{i_1}s_{i_2}\ldots s_{i_k}=g.$$
The above equality is equivalent to
$$s_{i_1}s_{i_2}\ldots s_{i_k}=0.$$
Thus there holds the following
\begin{lemma}\label{l:pod}
If $S=\{s_1,\ldots,s_n\}$ is a generating set of a finite group $G$ and for some generators $s_{i_1},s_{i_2},\ldots,s_{i_k}$ the product  $s_{i_1}s_{i_2}\ldots s_{i_k}$ is irreducible and equal to 0, then $\Cay(G,S)$ contains a cycle of length $k$.
\end{lemma}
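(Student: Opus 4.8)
The plan is to exhibit an explicit cycle of length $k$ in $\Cay(G,S)$, thereby formalizing the observation made in the two sentences immediately preceding the statement. Since those sentences already establish that the existence of a cycle is equivalent to the existence of an irreducible product of generators equal to $0$, the entire content of the lemma is the structural translation of the algebraic relation $s_{i_1}s_{i_2}\cdots s_{i_k}=0$ into a genuine graph cycle; no real computation is required.

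First I would fix an arbitrary base vertex $g\in G$ (taking $g=0$ is the most convenient choice) and read off the sequence of partial products $v_0=g$ and $v_j=g\,s_{i_1}s_{i_2}\cdots s_{i_j}$ for $1\le j\le k$. By construction each consecutive pair satisfies $v_j=v_{j-1}s_{i_j}$, so $(v_{j-1},v_j)$ is an edge of $\Cay(G,S)$ for every $j$. Because the full product equals the identity, the walk closes up: $v_k=g\cdot 0=g=v_0$. This already produces a closed walk of length exactly $k$ based at $g$, and since each $D_i$ (and more generally each symmetric generator) is an involution the relevant edges may be traversed in either direction.

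The main obstacle, and the only place where the hypothesis of irreducibility is genuinely used, is upgrading this closed walk to a \emph{cycle}, i.e.\ verifying that the vertices $v_0,\dots,v_{k-1}$ are pairwise distinct. Here I would interpret ``irreducible'', as in the preceding discussion, to mean that no proper nonempty consecutive subproduct of $s_{i_1}\cdots s_{i_k}$ equals $0$, and then argue by contradiction: if $v_a=v_b$ for some $0\le a<b\le k$ other than the pair $(a,b)=(0,k)$, then cancelling the shared prefix yields $s_{i_{a+1}}\cdots s_{i_b}=0$, a proper nonempty consecutive subproduct collapsing to the identity. This contradicts irreducibility, which is precisely the statement that no such shortening is possible. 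Hence all intermediate vertices are distinct, the closed walk is a cycle of length $k$, and the proof is complete.
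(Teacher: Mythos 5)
Your proof is correct and follows essentially the same route as the paper, which states the lemma as an immediate consequence of the preceding two sentences without a separate proof: you simply make explicit the partial-products walk $v_j=g\,s_{i_1}\cdots s_{i_j}$ and the fact that irreducibility (no proper consecutive subproduct equal to $0$) forces the vertices to be pairwise distinct. This fills in exactly the details the paper leaves implicit, with no divergence in method.
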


If $S=S^{-1}$, then $\Cay(G,S)$ is an undirected graph (i.e. if $s$ is an involution and if $u=vs$, then also $v=us$). Since every element of a diagonal base of $P_n(2)$ is an involution, the corresponding Cayley graphs of $P_n(2)$ may be considered as undirected.

\bigskip

Let us recall some graph theory facts, which will be useful in the last Section of this paper.

\smallskip

For any finite connected graph $\Gamma$ we define a natural metric $\sigma$ on the~set of vertices $V(\Gamma)$ in the~following way: $\sigma(u,v)=n$ if and only if the shortest way from vertex $u$ to vertex $v$ contains exactly $n$~edges (we assume that $\sigma(v,v)=0$ for every vertex $v$). We define a special type of induced subgraphs of a graph $\Gamma$.

\begin{definition}
Let $B(v,r)$ be the induced subgraph of given graph $\Gamma$ such that $v\in V(\Gamma)$, $r$~is fixed positive integer and $$V(B)=\{u\in V(\Gamma)|\,\sigma(u,v)\leq r\}.$$
We call $B(v,r)$ a ball with center $v$ and radius $r$.
\end{definition}

\section{Proof of the Main Theorem.}

Let $D=\{D_1,\ldots,D_n\}$ be a diagonal base of $P_n(2)$. Every $D_i$ ($i\geq2$) contains the monomial $\overline{x_{i-1}}$, thus if element $D_i$ appears in the product $D_{j_1}D_{j_2}\ldots D_{j_k}$ an odd number of times, then
$$[D_{j_1}D_{j_2}\ldots D_{j_k}]_i=\overline{x_{i-1}}+f(X_{i-1}),$$
where $f$ do not contain the monomial $\overline{x_{i-1}}$. Thus the necessary condition for the~equation
$$[D_{j_1}D_{j_2}\ldots D_{j_k}]_i=0$$
to hold is that the element $D_i$ appears in this product an even number of times. Hence from Lemma~\ref{l:pod} we get that every cycle of a graph $\Cay(P_n(2),D)$ has even length. Thus the minimal possible girth is equal to 4. As we stated in Introduction, every such graph for $n\geq2$, contains a cycle of length 8, so the girth is at most equal to 8. Let us investigate whenever such graph have lower girth.

\bigskip

Firstly, let us investigate for which diagonal bases of $P_n(2)$ there exists a cycle of length~4. The Graph $\Cay(P_2(2),D)$ is isomorphic to cyclic graph $C_8$, so it do not contain cycles of length~4.

\bigskip

For $n\geq3$ we say that the diagonal base $D=\{D_1,\ldots,D_n\}$ of group $P_n(2)$ satisfies condition~$(\star)$ if and only if for some $D_i$ ($2\leq i<n$) there exists $D_j$ $(i<j\leq n)$ of the form
$$[D_j]_j=\overline{x_{j-1}}+x_ig(X_{j-1})+h(X_{j-1}),$$
where $g$ and $h$ do not contain variable $x_i$ and $g$ do not contain a monomial $\overline{x_{j-1}}/x_i$, such that
$$g(X_{j-1})[D_i]_i=\alpha\cdot\overline{x_{j-1}}/x_i,$$
where
$$\alpha=
\left\{\begin{array}{ll}
0,&\mbox{ if }[D_i]_i\mbox{ contains even number of nonzero monomials,}\\
1,&\mbox{ if }[D_i]_i\mbox{ contains odd number of nonzero monomials.}
\end{array}\right.$$

\begin{proposition}\label{prop:1}
Let $D=\{D_1,\ldots,D_n\}$ be the diagonal base of $P_n(2)$, where $n\geq3$. The Cayley graph $\Cay(P_n(2),D)$ contains a cycle of length 4 if and only if $D$ satisfies condition~$(\star)$.
\end{proposition}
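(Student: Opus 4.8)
The plan is to use Lemma~\ref{l:pod} to reduce the existence of a 4-cycle to a purely algebraic condition on a pair of generators, and then to translate that condition into $(\star)$ by a direct computation in the quotient ring.

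First I would pin down the combinatorial shape of any length-4 cycle. By Lemma~\ref{l:pod} a 4-cycle corresponds to an irreducible product $D_{j_1}D_{j_2}D_{j_3}D_{j_4}=0$. The parity argument established at the start of this section forces each generator to occur an even number of times, so the multiset $\{j_1,j_2,j_3,j_4\}$ is either a single index repeated four times or two distinct indices each repeated twice. The first case is reducible since $D_i^2=0$, and among the arrangements of two equal pairs only $D_iD_jD_iD_j$ (with $i\neq j$) avoids an adjacent cancellation. Hence a 4-cycle exists if and only if $D_iD_jD_iD_j=0$ for some $i\neq j$. Because each $D_k$ is an involution, $D_k^{-1}=D_k$, so this equation is equivalent to $D_iD_j=D_jD_i$; that is, the graph has girth $4$ precisely when $D$ contains two commuting generators, which is the content of point~2 of the Main Theorem.

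Next I would compute the commuting condition explicitly. Taking $i<j$ and writing $[D_i]_i=d_i(X_{i-1})$, $[D_j]_j=d_j(X_{j-1})$, the multiplication formula of Section~2 shows that $D_iD_j$ and $D_jD_i$ agree on every coordinate except the $j$-th, where
$$[D_iD_j]_j=d_j(x_1,\ldots,x_{i-1},x_i+d_i,x_{i+1},\ldots,x_{j-1}),\qquad [D_jD_i]_j=d_j(X_{j-1}).$$
Thus $D_i$ and $D_j$ commute if and only if $d_j$ is invariant under the substitution $x_i\mapsto x_i+d_i$. Writing $d_j=x_iG+h$ with $G,h$ free of $x_i$, over $\mathbb{Z}_2$ this substitution changes $d_j$ by exactly $d_iG$, so commutativity is equivalent to $d_iG=0$.

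Finally I would match this with $(\star)$. Since $d_j$ contains $\overline{x_{j-1}}$, the full coefficient of $x_i$ splits as $G=\overline{x_{j-1}}/x_i+g$, where $g$ is free of $x_i$ and does not contain $\overline{x_{j-1}}/x_i$ — exactly the decomposition in $(\star)$. Because $d_i$ depends only on $x_1,\ldots,x_{i-1}$, multiplying each of its monomials by $\overline{x_{j-1}}/x_i$ and reducing via $x_m^2=x_m$ collapses every monomial to $\overline{x_{j-1}}/x_i$, giving $d_i\cdot\overline{x_{j-1}}/x_i=\alpha\cdot\overline{x_{j-1}}/x_i$ with $\alpha$ the parity of the number of monomials of $d_i$. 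Hence $d_iG=0$ rewrites as $g\,d_i=\alpha\cdot\overline{x_{j-1}}/x_i$, which is precisely $(\star)$. I expect the main obstacle to lie in the bookkeeping of this last step: justifying the monomial collapse and the resulting parity count inside the quotient ring, and confirming that the index $i=1$ (where $d_1=1$ would force the admissible $g$ to equal the forbidden monomial $\overline{x_{j-1}}/x_1$) genuinely yields no commuting pair, which is exactly why $(\star)$ is stated only for $i\geq2$.
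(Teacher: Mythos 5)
Your argument is correct and follows the paper's proof in all essentials: both reduce the existence of a 4-cycle, via Lemma~\ref{l:pod} and the parity argument, to the commutation of a single pair $D_i,D_j$, and both then reduce commutation to the vanishing of the $j$-th coordinate, arriving at the same identity $g(X_{j-1})[D_i]_i=\alpha\cdot\overline{x_{j-1}}/x_i$ through the same monomial-collapse and parity count. Your packaging of the computation as invariance of $[D_j]_j$ under $x_i\mapsto x_i+[D_i]_i$ is a clean equivalent of the paper's direct expansion of $[(D_jD_i)^2]_j$, and it neatly subsumes the paper's separate treatment of the case $i=1$.
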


\begin{proof}

From Lemma \ref{l:pod}, the existence of cycle of length 4 in $\Cay(P_n(2),D)$ is equivalent to the existence of solution of equation

$$(D_iD_j)^2=0\ \ \ \ \mbox{ or }\ \ \ \ (D_jD_i)^2=0$$

for some $i,j,$ such that $i<j\leq n$.
The above equalities are equivalent and hold whenever $D_j$ and $D_i$ commute. Thus further we will only consider the case $(D_jD_i)^2=0$.

$D_jD_i$ is a tableau with only nonzero elements on $i$-th and $j$-th coordinates, namely
$$[D_jD_i]_i=[D_i]_i\ \ \ \mbox{ and }\ \ \ [D_jD_i]_j=[D_j]_j.$$

Thus $(D_jD_i)^2=0$ if and only if $$[(D_jD_i)^2]_i=0\ \ \ \mbox{ and }\ \ \ [(D_jD_i)^2]_j=0.$$
The first of the above equalities is always true (simple checking), so we have to investigate only the second one.

\smallskip

Firstly, let us assume that $i=1$. Of course $$D_1=[1,0,\ldots,0]$$ for every diagonal base $D$. Let $$[D_j]_j=\overline{x_{j-1}}+x_1g(X_{i-1})+h(X_{i-1}),$$ where $g$ and $h$ do not contain variable $x_1$ and $g$ do not contain the monomial $\overline{x_{j-1}}/x_1$. Thus
\begin{align*}
[(D_jD_1)^2]_j=&\,\overline{x_{j-1}}+x_1g(X_{i-1})+h(X_{i-1})+(\overline{x_{j-1}}/x_1)(x_1+1)+(x_1+1)g(X_{i-1})+h(X_{i-1})=\\
=&\,\overline{x_{j-1}}/x_1+g(X_{j-1}).
\end{align*}
But $g$ do not contain monomial $\overline{x_{j-1}}/x_1$, and so $\overline{x_{j-1}}/x_1+g(X_{j-1})\neq0$.

Hence the element $D_1$ do not commute with any other element of $D$, and thus it do not belong to any cycle of length 4 in the graph $\Cay(P_n(2),D)$. 

\smallskip

Let us now investigate the general case for $1<i<j\leq n$.

Let
$$[D_i]_i=\overline{x_{i-1}}+f(X_{i-1}),$$
where $f$ do not contain a monomial $\overline{x_{i-1}}$ and let
$$[D_j]_j=\overline{x_{j-1}}+x_ig(X_{j-1})+h(X_{j-1}),$$
where polynomials $g$ and $h$ do not contain variable $x_i$ and $g$ do not contain a monomial $\overline{x_{j-1}}/x_i$.

Then
\begin{align*}
[(D_jD_i)^2]_j=&\,\overline{x_{j-1}}+x_ig(X_{j-1})+h(X_{j-1})+\\
&\,+(\overline{x_{j-1}}/x_i)(x_i+\overline{x_{i-1}}+f(X_{i-1}))+\\
&\,+(x_i+\overline{x_{i-1}}+f(X_{i-1}))g(X_{j-1})+h(X_{j-1})=\\
=&\,(\overline{x_{j-1}}/x_i)(1+f(X_{i-1}))+g(X_{j-1})(\overline{x_{i-1}}+f(X_{i-1})).
\end{align*}
The product $(\overline{x_{j-1}}/x_i)(1+f(X_{i-1}))$ is equal to 0 or $\overline{x_{j-1}}/x_i$ depending of number of nonzero monomials in $f$. Let us check both cases:

\begin{enumerate}

\item If the number of nonzero monomials on $f$ is odd, then $$(\overline{x_{j-1}}/x_i)(1+f(X_{i-1}))=0$$ and thus
$$[(D_jD_i)^2]_j=g(X_{j-1})(\overline{x_{i-1}}+f(X_{i-1}))=g(X_{j-1})[D_i]_i.$$
Hence in this case $D_j$ and $D_i$ commute if and only if $g(X_{j-1})[D_i]_i=0$.

\item If the number of nonzero monomials in $f$ is even, then $$(\overline{x_{j-1}}/x_i)(1+f(X_{i-1}))=\overline{x_{j-1}}/x_i$$ and thus
$$[(D_jD_i)^2]_j=\overline{x_{j-1}}/x_i+g(X_{j-1})(\overline{x_{i-1}}+f(X_{i-1}))=\overline{x_{j-1}}/x_i+g(X_{j-1})[D_i]_i.$$
Hence in this case $D_j$ and $D_i$ commute if and only if $g(X_{j-1})[D_i]_i=\overline{x_{j-1}}/x_i$.
\end{enumerate}
\end{proof}

From Proposition \ref{prop:1} we get some natural examples of Cayley graphs of $P_n(2)$ with/without cycles of length 4:

\begin{example}\label{e:1}
 \begin{enumerate}
  \item If $[D_i]_i=\overline{x_{i-1}}$ for every $i$ such that $1<i\leq n$, then $\Cay(P_n(2),D)$ do not contain a cycle of length 4.
  \item If $[D_i]_i=\overline{x_{i-1}}+a_i$ for every $i$ such that $1<i<n$ and $$\max\{a_i:1\leq i\leq n\}=1,$$ then $\Cay(P_n(2),D)$ contains a cycle of length 4.
 \end{enumerate}
\end{example} 

In further investigation of the girths of $P_n(2)$ we will use the  following

\begin{proposition}\label{prop:2}
Let $D=\{D_1,\ldots D_n\}$ be a diagonal base of group $P_n(2)$ such that graph $\Cay(P_n(2),D)$ do not contain cycle of length 4, and let $D_{e_1},\ldots,D_{e_k}$ be those elements of~$D$ which contain an even number of monomials. Let $\overline{D}=\{\overline{D}_1,\ldots,\overline{D}_{n+1}\}$ be a diagonal base of $P_{n+1}(2)$ such that $(\overline{D}_i)_n=D_i$ for $i=1,\ldots,n$, and
$$[\overline{D}_{n+1}]_{n+1}=\overline{x_n}+\sum\limits_{j=1}^kx_{e_j}.$$
Then $\Cay(P_{n+1}(2),\overline{D})$ do not contain a cycle of length 4.
\end{proposition}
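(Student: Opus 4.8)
The plan is to reduce the whole statement to Proposition~\ref{prop:1}: a cycle of length~4 exists in $\Cay(P_{n+1}(2),\overline{D})$ if and only if $\overline{D}$ satisfies $(\star)$, equivalently if and only if some pair of generators $\overline{D}_i,\overline{D}_j$ with $i<j\leq n+1$ commute. I would split the candidate pairs into two groups according to whether $j\leq n$ or $j=n+1$, and rule out commuting in each group separately. Once both are excluded, $\overline{D}$ fails $(\star)$ and Proposition~\ref{prop:1} finishes the proof.

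For pairs with $j\leq n$ I would argue that the relevant computation lives entirely among the first $n$ coordinates. Since $\overline{D}_i$ and $\overline{D}_j$ are diagonal with nonzero entries only at positions $i,j\leq n$, and since $(\overline{D}_i)_n=D_i$ and $(\overline{D}_j)_n=D_j$ give $[\overline{D}_i]_i=[D_i]_i$ and $[\overline{D}_j]_j=[D_j]_j$, the tableau $(\overline{D}_j\overline{D}_i)^2$ agrees with $(D_jD_i)^2$ on every coordinate with index at most $n$, in particular on coordinate $j$. Hence $\overline{D}_i$ and $\overline{D}_j$ commute if and only if $D_i$ and $D_j$ commute; but $\Cay(P_n(2),D)$ has no cycle of length~4, so by Proposition~\ref{prop:1} no such pair commutes.

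The heart of the argument is the case $j=n+1$, and the point is that the choice $[\overline{D}_{n+1}]_{n+1}=\overline{x_n}+\sum_{l=1}^k x_{e_l}$ is rigged to fail the criterion of $(\star)$ for every $i$ with $1\leq i\leq n$. For a fixed such $i$ I would write $[\overline{D}_{n+1}]_{n+1}=\overline{x_n}+x_i g(X_n)+h(X_n)$ as in $(\star)$. Because the correction term $\sum_l x_{e_l}$ is a sum of distinct single variables, the coefficient of $x_i$ is $g=1$ exactly when $i\in\{e_1,\dots,e_k\}$, that is, when $[D_i]_i$ has an even number of monomials, and $g=0$ otherwise. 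Feeding this into the commuting criterion $g\,[\overline{D}_i]_i=\alpha\cdot\overline{x_n}/x_i$ then yields a contradiction in both parities: when $[D_i]_i$ has an even number of monomials we have $\alpha=0$ and $g=1$, forcing $[D_i]_i=0$, which is impossible since $[D_i]_i$ contains the monomial $\overline{x_{i-1}}$; when $[D_i]_i$ has an odd number of monomials we have $\alpha=1$ and $g=0$, forcing $\overline{x_n}/x_i=0$, which is equally impossible.

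I expect the only delicate point to be the bookkeeping of parity, and this is where the design of $\overline{D}_{n+1}$ must be respected exactly: one has to match the definition of the index set $\{e_1,\dots,e_k\}$ (even number of monomials) with the value of $\alpha$ in $(\star)$ (namely $0$ for even, $1$ for odd) and with the resulting coefficient $g$, so that the even case lands on the impossibility $[D_i]_i=0$ and the odd case on the impossibility $\overline{x_n}/x_i=0$. With this matching correctly set up, no pair involving $\overline{D}_{n+1}$ commutes; combined with the previous paragraph, $\overline{D}$ satisfies no instance of $(\star)$, and Proposition~\ref{prop:1} shows that $\Cay(P_{n+1}(2),\overline{D})$ contains no cycle of length~4.
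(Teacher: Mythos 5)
Your proposal is correct and follows essentially the same route as the paper: reduce to showing that no two generators of $\overline{D}$ commute, inherit this for pairs among $\overline{D}_1,\ldots,\overline{D}_n$ from the hypothesis on $D$, and rule out pairs involving $\overline{D}_{n+1}$ by matching the parity of the number of monomials of $[D_i]_i$ with the coefficient of $x_i$ in $[\overline{D}_{n+1}]_{n+1}$. The only difference is that you feed the data into the criterion of Proposition~\ref{prop:1} (with $g\in\{0,1\}$ and the corresponding $\alpha$) instead of recomputing $[(\overline{D}_{n+1}\overline{D}_i)^2]_{n+1}$ directly as the paper does, which amounts to the same calculation.
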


\begin{proof}
Graph $\Cay(P_{n+1},\overline{D})$ contains a cycle of length 4 if there exist different generators $\overline{D}_i,\overline{D}_j\in\overline{D}$ which commute. The graph $\Cay(P_n(2),D)$ do not contain a cycle of length 4, so none of generators $\overline{D}_1,\overline{D}_2,\ldots,\overline{D}_n$ commute. Thus we only have to show that for every $i=1,\ldots,n$ generator $\overline{D}_{n+1}$ do not commute with $\overline{D}_i$.

We have shown that the generator $\overline{D}_1$ do not commute with any other generator, so we may assume that $i\geq2$.

Let
$$[\overline{D}_i]_i=\overline{x_{i-1}}+d_i(X_{i-1}),$$
where $d_i$ do not contain a monomial $\overline{x_{i-1}}$.

Let us consider two cases:
\begin{enumerate}
\item $\overline{D}_i$ contains an even number of monomials.

In this case
\begin{align*}
[(\overline{D}_{n+1}\overline{D}_{i})^2]_{n+1}&=\overline{x_n}+\sum\limits_{j=1}^kx_{e_j}+(\overline{x_n}/x_i)(x_i+\overline{x_{i-1}}+d_i(X_{i-1}))+\sum\limits_{j=1,e_j\neq i}^kx_{e_j}+x_i+\overline{x_{i-1}}+d_i(X_{i-1})=\\
&=(\overline{x_n}/x_i)(1+d_i(X_{i-1}))+d_i(X_{i-1}).
\end{align*}
$d_i$ contains an odd number of nonzero monomials, so $d_i\neq0$ and $(\overline{x_n}/x_i)(1+d_i(X_{i-1}))=0$.

Thus $[(\overline{D}_{n+1}\overline{D}_i)^2]_{n+1}=d_i(X_{i-1})\neq0$.

\item $\overline{D}_i$ contains an odd number of monomials.

In this case
\begin{align*}
[(\overline{D}_{n+1}\overline{D}_{i})^2]_{n+1}&=\overline{x_n}+\sum\limits_{j=1}^kx_{e_j}+(\overline{x_n}/x_i)(x_i+\overline{x_{i-1}}+d_i(X_{i-1}))+\sum\limits_{j=1}^kx_{e_j}=\\
&=(\overline{x_n}/x_i)(1+d_i(X_{i-1})).
\end{align*}
$d_i$ contains an even number of monomials, so $(\overline{x_n}/x_i)(1+d_i(X_{i-1}))\neq0$ and hence $[(\overline{D}_{n+1}\overline{D}_i)^2]_{n+1}\neq0$.
\end{enumerate}
Thus none of elements of base $\overline{D}$ commutes, and so graph $\Cay(P_{n+1}(2),\overline{D})$ do not contain a~cycle of length 4.
\end{proof}

The Cayley graph $\Cay(P_n(2),D)$ on diagonal base $D=\{D_1,\ldots,D_n\}$ contains a cycle of length 6 if and only if there exists generators $D_i,\,D_j,\,D_k$, where $i<j<k$ such that one of the following equalities holds:
\begin{align*}
(D_kD_jD_i)^2&=0\\
(D_kD_iD_j)^2&=0\\
D_kD_jD_iD_jD_kD_i&=0\\
D_kD_jD_kD_iD_jD_i&=0\\
D_kD_jD_iD_kD_iD_j&=0
\end{align*}

Let us notice that only the last equation do not imply the existence of cycle of length 4 (for example if $D_kD_jD_iD_jD_kD_i=0$, then also $D_jD_iD_jD_i=0$). Thus $\Cay(P_n(2),D)$ have a~minimal cycle of length 6 iff none of the elements of $D$ commute and if for some generators $D_i,\,D_j,\,D_k$ ($i<j<k$) the following equality holds:
\begin{equation}\label{eq:1}
D_kD_jD_iD_kD_iD_j=0. 
\end{equation}

Before we state the next proposition, we will need the following technical Lemma:

\begin{lemma}\label{l:c6}
If $D=\{D_1,\ldots,D_n\}$ is a diagonal base of $P_n(2)$ where $n\geq3$, then
$$D_kD_jD_1D_kD_1D_j\neq0$$
for every $j,k$ such that $1<j<k\leq n$.
\end{lemma}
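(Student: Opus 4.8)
The plan is to prove that the product does not vanish by exhibiting a single coordinate on which it is nonzero; since $k$ is the largest index occurring, the $k$-th coordinate is the natural candidate. Set $Q=D_kD_jD_1D_kD_1D_j$. Because $D_1,D_j,D_k$ are diagonal, supported respectively on coordinates $1,j,k$, and $1<j<k$, the multiplication formula shows that $[Q]_k$ is the sum, over the two occurrences of $D_k$ (in positions $1$ and $4$), of $[D_k]_k=d_k(X_{k-1})$ with its arguments shifted by the partial product standing to its left. The first occurrence is unshifted and contributes $d_k(X_{k-1})$.

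First I would compute the shift governing the fourth factor, which is determined by the partial product $D_kD_jD_1$. A direct application of the product formula gives $[D_kD_jD_1]_1=1$, $[D_kD_jD_1]_j=d_j(X_{j-1})$, and $[D_kD_jD_1]_m=0$ for every other $m<k$, since the three diagonal generators produce no cross terms below level $k$. Hence the fourth factor contributes $d_k$ acted on by the substitution $\varphi$ defined by $x_1\mapsto x_1+1$ and $x_j\mapsto x_j+d_j(X_{j-1})$, fixing all other variables, and therefore
$$[Q]_k=d_k(X_{k-1})+\varphi(d_k).$$
It remains to show that this polynomial is not identically zero.

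The main obstacle is that the leading monomial $\overline{x_{k-1}}$, which $d_k$ is guaranteed to contain, occurs in both summands and cancels, so non-vanishing must be detected on a lower monomial. I claim the correct witness is $\overline{x_{k-1}}/x_1=x_2\cdots x_{k-1}$. The key observation is that this monomial is divisible by $x_j$, whereas the substitution $x_j\mapsto x_j+d_j(X_{j-1})$ produces an $x_j$-divisible term only through the summand $x_j$: the summand $d_j(X_{j-1})$ involves only $x_1,\ldots,x_{j-1}$ and is not divisible by $x_j$. Consequently, on the $x_j$-divisible part of $d_k$ the substitution $\varphi$ acts solely through $x_1\mapsto x_1+1$. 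I would then track the coefficient of $\overline{x_{k-1}}/x_1$: under $x_1\mapsto x_1+1$ every $x_j$-divisible monomial of $d_k$ free of $x_1$ is left unchanged and so cancels between the two summands, while each $x_j$-divisible monomial containing $x_1$ survives as its $x_1$-deletion. The deletion equal to $\overline{x_{k-1}}/x_1$ arises only from $\overline{x_{k-1}}$, whose coefficient is $1$. Hence the coefficient of $\overline{x_{k-1}}/x_1$ in $[Q]_k$ equals $1$, so $[Q]_k\neq0$ and therefore $Q\neq0$.

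I expect the delicate step to be the bookkeeping in this last paragraph, namely making precise that no monomial of $d_k$ other than the leading one can generate $\overline{x_{k-1}}/x_1$ after substitution. The divisibility-by-$x_j$ reduction is what keeps the computation clean, since it discards at once every contribution coming from the $d_j(X_{j-1})$ summand of $\varphi(x_j)$ and reduces the whole analysis to the single-variable toggle $x_1\mapsto x_1+1$.
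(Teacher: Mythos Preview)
Your proof is correct and follows essentially the same strategy as the paper: both compute $[Q]_k$ and show it is nonzero by exhibiting the monomial $\overline{x_{k-1}}/x_1$, which arises solely from the leading term $\overline{x_{k-1}}$ of $d_k$. The paper does this via an explicit decomposition $[D_k]_k=\overline{x_{k-1}}+x_1x_jg_1+x_1g_2+x_jg_3+g_4$ and then computes $[Q]_k=x_jh_1+h_2$ with $h_1=g_1+\overline{x_{k-1}}/(x_1x_j)\neq0$, whereas your divisibility-by-$x_j$ reduction to the single toggle $x_1\mapsto x_1+1$ is a cleaner route to the same witness.
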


\begin{proof}
Let
$$[D_j]_j=\overline{x_{j-1}}+x_1f_1(X_{j-1})+f_2(X_{j-1}),$$
where $f_1$ and $f_2$ do not contain the variable $x_1$ and $f_1$ do not contain the monomial $\overline{x_{j-1}}/x_1$ and let
$$[D_k]_k=\overline{x_{k-1}}+x_1x_jg_1(X_{k-1})+x_1g_2(X_{k-1})+x_jg_3(X_{k-1})+g_4(X_{k-1}),$$
where $g_1,g_2,g_3,g_4$ do not contain the variables $x_1$ and $x_j$ and $g_1$ do not contain the monomial $\overline{x_{k-1}}/(x_1x_j)$.

\smallskip

Of course $[D_kD_jD_1D_kD_1D_j]_l=0$ for every $l=1,\ldots,n$ such that $l\neq k$. Let us investigate the~$k$-th coordinate:
$$[D_kD_jD_1D_kD_1D_j]_k=x_jh_1(X_{k-1})+h_2(X_{k-1}),$$
where
$$h_1(X_{k-1})=g_1(X_{k-1})+\overline{x_{k-1}}/(x_1x_j)$$
and
\begin{align*}
h_2(X_{k-1})=&(\overline{x_{k-1}}/(x_1x_j))\Big(x_1f_1(X_{j-1})+f_2(X_{j-1})\Big)+\\
&+x_1\Big(f_2(X_{j-1})g_1(X_{k-1})+f_1(X_{j-1})g_3(X_{k-1})\Big)+\\
&+g_2(X_{k-1})+f_2(X_{j-1})g_1(X_{k-1})+f_2(X_{j-1})g_3(X_{k-1}).
\end{align*}
Let us notice that $h_1$ and $h_2$ do not contain the variable $x_j$. Thus $[D_kD_jD_1D_kD_1D_j]_3=0$ if and only if $h_1(X_{k-1})=0$ and $h_2(X_{k-1})=0$. But from assumption that $g_1$ do not contain the monomial $\overline{x_{k-1}}/(x_1x_j)$ we get $h_1(X_{k-1})\neq0$ and hence $[D_kD_jD_1D_kD_1D_j]_k\neq0$.
\end{proof}

\begin{proposition}\label{prop:3}
A diagonal base $D$ such that the girth of $\Cay(P_n(2),D)$ is equal to 6 exists if and only if $n\geq5$.
\end{proposition}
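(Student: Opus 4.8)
The plan is to prove the two implications separately, relying throughout on the characterisation established just before the statement: the girth of $\Cay(P_n(2),D)$ equals $6$ if and only if no two elements of $D$ commute (so that Proposition~\ref{prop:1} forbids a $4$-cycle) and some triple $D_i,D_j,D_k$ with $i<j<k$ satisfies equation~(\ref{eq:1}). The first observation, used in both directions, is that Lemma~\ref{l:c6} rules out the index $i=1$: any triple producing a genuine $6$-cycle must have $2\le i<j<k$. Hence the smallest admissible triple is $(2,3,4)$, which already forces $n\ge4$; the content of the proposition is that level $4$ is still not enough, while level $5$ is.

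For the ``only if'' direction I would dispose of the small cases by exhausting the admissible triples. For $n=2$ the graph is $C_8$, so the girth is $8$. For $n=3$ the only triple is $(1,2,3)$, and by Lemma~\ref{l:c6} (with $j=2$, $k=3$) equation~(\ref{eq:1}) fails, so the girth is $4$ or $8$. For $n=4$ the triples $(1,2,3),(1,2,4),(1,3,4)$ all have $i=1$ and are excluded by Lemma~\ref{l:c6}, leaving only $(2,3,4)$. The crux is therefore to show that $D_4D_3D_2D_4D_2D_3\ne0$ for every diagonal base of $P_4(2)$. I would carry this out exactly as in Lemma~\ref{l:c6}: write $[D_2]_2$, $[D_3]_3$, $[D_4]_4$ in their general diagonal form, decomposing $[D_4]_4$ according to its dependence on $x_2$ and $x_3$, then expand $[D_4D_3D_2D_4D_2D_3]_4$ by the product formula of Section~2 and isolate the top surviving monomial (the analogue of the factor $h_1$ in Lemma~\ref{l:c6}). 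The expectation is that this leading part cannot be cancelled within only four levels, so the coordinate is nonzero. This single verification is the main obstacle of the necessity direction, since it is quantified over all admissible bases and demands careful bookkeeping of the many cross-terms produced by the substitutions.

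For the ``if'' direction I would reduce the general case to $n=5$. Suppose a diagonal base $D$ of $P_5(2)$ has girth $6$: it has no $4$-cycle, so Proposition~\ref{prop:2} applies and yields a diagonal base $\overline D$ of $P_6(2)$ that again has no $4$-cycle and satisfies $\overline D_i=D_i$ for $i\le5$. A triple $(i,j,k)$ with $k\le5$ witnessing~(\ref{eq:1}) in $P_5(2)$ then witnesses it again in $P_6(2)$, because the product $\overline D_k\overline D_j\overline D_i\overline D_k\overline D_i\overline D_j$ involves only coordinates $\le5$, on which $\overline D$ agrees with $D$. Hence $\overline D$ has a $6$-cycle but no $4$-cycle, so by the characterisation above its girth is $6$; iterating gives a base of girth $6$ for every $n\ge5$.

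It remains to exhibit one base of $P_5(2)$ of girth $6$, which is the creative step. Guided by the failed $n=4$ computation---which pinpoints exactly which monomial obstructs the vanishing of the key coordinate---I would choose $D_1,\dots,D_5$ so that for a suitable triple ending at level $5$ the extra variable $x_4$ available in $[D_5]_5$ (but absent from $[D_4]_4$) supplies precisely the term needed to cancel that obstruction. Concretely I would fix explicit polynomials for $[D_2]_2,\dots,[D_5]_5$ and then verify two things by direct substitution: first, using the commuting criterion of Proposition~\ref{prop:1}, that no pair among $D_1,\dots,D_5$ commutes (ruling out girth $4$); and second, that the chosen triple satisfies equation~(\ref{eq:1}) (producing the $6$-cycle). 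The comparison with the $n=4$ obstruction is what shows that level $5$ is the first level at which such a cancellation becomes available, which is the structural reason the threshold is exactly $n\ge5$.
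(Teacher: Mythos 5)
Your overall strategy coincides with the paper's: use Lemma~\ref{l:c6} to exclude triples with $i=1$, settle $n=2,3,4$ by hand, exhibit an explicit base of $P_5(2)$ realizing girth $6$, and push it up to all $n>5$ via Proposition~\ref{prop:2} (your observation that the witnessing product only involves coordinates $\le 5$, on which $\overline{D}$ agrees with $D$, is a correct and welcome justification of the inductive step, which the paper leaves implicit). However, your treatment of the decisive case $n=4$ contains a step that would fail. You propose to show that $D_4D_3D_2D_4D_2D_3\neq0$ for \emph{every} diagonal base of $P_4(2)$. That claim is false: take $[D_2]_2=x_1$, $[D_3]_3=x_1x_2$, $[D_4]_4=x_1x_2x_3+x_2x_3=x_2x_3(x_1+1)$. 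A direct computation with the product formula gives $[D_4D_3D_2D_4]_4=x_2x_3(x_1+1)+(x_2+x_1)(x_3+x_1x_2)(x_1+1)=0$, and hence $D_4D_3D_2D_4D_2D_3=0$. What saves the proposition is that in this (and every such) example some pair of generators commutes --- here $(D_4D_2)^2=0$, so the graph has girth $4$, not $6$. The correct statement, and the one the paper verifies by direct calculation, is the unsatisfiability of the \emph{conjunction} ``no two generators commute \emph{and} $D_4D_3D_2D_4D_2D_3=0$''; by dropping the no-commuting-pair hypothesis you have reduced the necessity direction to a stronger assertion that is simply not true.

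A second, lesser gap is in the sufficiency direction: you never actually produce a base of $P_5(2)$ of girth $6$, only a heuristic for finding one. The existence claim is the content of the proposition and cannot be left to an expectation; the paper supplies the explicit base $S$ with $[S_4]_4=\overline{x_3}+x_2x_3+x_1+1$ and $[S_5]_5=\overline{x_4}+x_2x_3+x_3x_4+x_2+x_4$, for which no two generators commute and $S_5S_4S_2S_5S_2S_4=0$. Note also that the witnessing triple there is $(2,4,5)$, not a ``consecutive'' triple ending at level $5$, so the narrative that the obstruction from the triple $(2,3,4)$ is cancelled by the extra variable $x_4$ at level $5$ should not be taken as the structure of the actual example.
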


\begin{proof}
Firstly, let us show that for $n<5$ there are no Cayley graphs of the group $P_n(2)$ on diagonal base $D$ which have girth equal to 6.

\smallskip

Graph $\Cay(P_2(2),D)$ for any diagonal base $D$ is isomorphic to cyclic graph $C_8$, so its girth is equal to 8.

\smallskip

Graph $\Cay(P_3(2),D)$, where $D=\{D_1,D_2,D_3\}$ is a diagonal base of $P_3(2)$, contains a~minimal cycle of length 6 only if the equation
$$D_3D_2D_1D_3D_1D_2=0$$
holds, but from Lemma \ref{l:c6} this is not possible.

\smallskip

Graph $\Cay(P_4(2),D)$, where $D=\{D_1,D_2,D_3,D_4\}$ is a diagonal base of $P_4(2)$, contains a~minimal cycle of length 6 if and only if none of elements of $D$ commute and the equation
$$D_4D_3D_2D_4D_2D_3=0$$
holds. Direct calculations show that there is no diagonal base $D$ satisfying both conditions.

\bigskip

Now let us recall the following base $S=\{S_1,\ldots,S_5\}$ of $P_5(2)$:

\bigskip

\begin{tabular}{llllll}
$S_1=[1,$&$0,$&$0,$&$0,$&$0$&$],$\\
$S_2=[0,$&$x_1+1,$&$0,$&$0,$&$0$&$],$\\
$S_3=[0,$&$0,$&$\overline{x_2}+x_2,$&$0,$&$0$&$],$\\
$S_4=[0,$&$0,$&$0,$&$\overline{x_3}+x_2x_3+x_1+1,$&$0$&$],$\\
$S_5=[0,$&$0,$&$0,$&$0,$&$\overline{x_4}+x_2x_3+x_3x_4+x_2+x_4$&$],$\\
\end{tabular}

\bigskip

Let us notice, that for any $1\leq i<j\leq5$ the inequality $(S_jS_i)^2\neq0$ holds. Moreover it is easy to check that
$$S_5S_4S_2S_5S_2S_4=0,$$
thus the girth of $\Cay(P_5(2),S)$ is equal to 6.

Applying Proposition \ref{prop:2} to this base we obtain a Cayley graph $\Cay(P_n(2),D)$ with girth equal to 6 for any $n>5$.

\end{proof}

Let us recall the diagonal base from Example \ref{e:1}, which do not contain a cycle of length 4.

\begin{proposition}\label{prop:4}
Let $D=\{D_1,\ldots,D_n\}$ be a diagonal base of $P_n(2)$, $n\geq2$, such that $[D_1]_1=1$ and $[D_i]_i=\overline{x_{i-1}}$ for every $i=2,\ldots,n$. The girth of graph $\Cay(P_n(2),D)$ is equal to 8.
\end{proposition}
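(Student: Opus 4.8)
The plan is to combine what is already known about this base with a single computation that rules out a $6$-cycle. We know from the start of Section~3 that every cycle of $\Cay(P_n(2),D)$ has even length, that a $C_8$ always sits inside as an induced subgraph, and hence that the girth lies in $\{4,6,8\}$. For the base of the statement, Example~\ref{e:1}(1) (equivalently, the failure of condition $(\star)$ via Proposition~\ref{prop:1}) already shows there is no cycle of length $4$. So the whole task reduces to proving that $\Cay(P_n(2),D)$ contains no cycle of length $6$; by the discussion preceding Equation~(\ref{eq:1}), since no two generators commute, it suffices to show that the equation $D_kD_jD_iD_kD_iD_j=0$ has no solution with $1\le i<j<k\le n$.

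My first step is to simplify this equation. Since each $D_m$ is an involution, left-multiplying $D_kD_jD_iD_kD_iD_j=0$ successively by $D_k$, $D_j$, $D_i$ turns it into the equivalent condition
$$D_kD_iD_j=D_iD_jD_k,$$
that is, $D_k$ commutes with the product $T:=D_iD_j$. Thus the $6$-cycle question is exactly whether $T$ and $D_k$ commute. The second step is to write $T$ down explicitly for this base. Since $[D_i]_i=\overline{x_{i-1}}$ acts by $x_i\mapsto x_i+\overline{x_{i-1}}$, a short computation using idempotency $x^2=x$ gives a tableau $T$ supported only on coordinates $i$ and $j$, with $[T]_i=\overline{x_{i-1}}$ and $[T]_j=\overline{x_{j-1}}+\overline{x_{j-1}}/x_i$.

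The third step is to compare $D_kT$ and $TD_k$ on their $k$-th coordinate. On one side $[D_kT]_k=\overline{x_{k-1}}$, because $[T]_k=0$. On the other side $[TD_k]_k$ is obtained from $\overline{x_{k-1}}=x_1\cdots x_{k-1}$ by the simultaneous substitution $x_i\mapsto x_i+[T]_i$ and $x_j\mapsto x_j+[T]_j$, which factors as
$$[TD_k]_k=\big(\overline{x_{k-1}}/(x_ix_j)\big)\big(x_i+\overline{x_{i-1}}\big)\big(x_j+\overline{x_{j-1}}+\overline{x_{j-1}}/x_i\big).$$
Expanding and collapsing repeated variables via $x^2=x$ (here the $\overline{x_{i-1}}$ and $\overline{x_{j-1}}$ factors are absorbed into the large leading product) is expected to leave
$$[TD_k]_k=\overline{x_{k-1}}+\overline{x_{k-1}}/x_i+\overline{x_{k-1}}/x_j+\overline{x_{k-1}}/(x_ix_j),$$
so that $[TD_k]_k-[D_kT]_k=\overline{x_{k-1}}/x_i+\overline{x_{k-1}}/x_j+\overline{x_{k-1}}/(x_ix_j)$. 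These three monomials are pairwise distinct, hence their $\mathbb{Z}_2$-sum is nonzero; therefore $D_kT\neq TD_k$, no triple gives a $6$-cycle, and the girth is $8$. (The boundary case $i=1$ is already settled by Lemma~\ref{l:c6}, but the same substitution with $\overline{x_0}=1$ also handles it directly.)

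I expect the main obstacle to be purely the bookkeeping in the third step: carefully performing the simultaneous substitution, correctly absorbing the factors $\overline{x_{i-1}}$ and $\overline{x_{j-1}}$ into $\overline{x_{k-1}}/(x_ix_j)$ under idempotency, and then certifying that the surviving monomials are genuinely distinct so that the sum cannot cancel over $\mathbb{Z}_2$. Everything else — the reduction to a commutator condition and the description of $T$ — is routine once the involution relations and the product formula from Section~2 are invoked.
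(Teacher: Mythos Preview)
Your proposal is correct and follows essentially the same route as the paper: reduce to excluding a $6$-cycle via Proposition~\ref{prop:1}/Example~\ref{e:1}(1), invoke Lemma~\ref{l:c6} for $i=1$, and for $i\ge 2$ verify by direct substitution and idempotency that the $k$-th coordinate obstruction is nonzero. The only cosmetic difference is that you rewrite $D_kD_jD_iD_kD_iD_j=0$ as the commutator condition $D_k(D_iD_j)=(D_iD_j)D_k$ before computing, whereas the paper computes $[D_kD_jD_iD_kD_iD_j]_k$ directly and obtains the single monomial $\overline{x_{k-1}}/x_i$; both computations are the same substitution-and-absorb argument and yield nonzero results.
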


\begin{proof}
From Proposition \ref{prop:1} we get that graph $\Cay(P_n(2),D)$ do not contain a cycles of length~4. Graph $\Cay(P_n(2),D)$ may contain a cycle of length 6 only if $n\geq3$ and it will contain such cycles only if for some integers $i,j,k$ such that $i<j<k\leq n$ the equation (\ref{eq:1}) holds. Obvieusly $[D_kD_jD_iD_kD_iD_j]_i=0$ and $[D_kD_jD_iD_kD_iD_j]_j=0$. Thus we have to prove that $[D_kD_jD_iD_kD_iD_j]_k\neq0$. Let us consider two cases:
\begin{enumerate}

\item $i=1$.

From Lemma \ref{l:c6} the inequality $D_kD_jD_1D_kD_1D_j\neq0$ holds. Moreover let us notice that this case shows that if $n=3$ then graph $\Cay(P_n(2),D)$ do not contain a cycle of length~6.

\item $i\geq2$.

Let us notice that this case is possible only for $n\geq4$.
\begin{align*}
[D_kD_jD_iD_kD_iD_j]_k&=\overline{x_{k-1}}+(\overline{x_{k-1}}/(x_ix_j))(x_i+\overline{x_{i-1}})(x_j+\overline{x_{j-1}})=\\
&=\overline{x_{k-1}}/x_i\neq0.
\end{align*}
\end{enumerate}
Thus inequality $[D_kD_jD_iD_kD_iD_j]_k\neq0$ holds in both cases. Hence the graph $\Cay(P_n(2),D)$ do not contain cycles of length 4 and 6, and so the girth of this graph is equal to 8 for all $n\geq2$.
\end{proof}

\bigskip

\textbf{Proof of Main Theorem.}

\bigskip

The statements 2, 3 and 4 hold by  Prospositions \ref{prop:1}, \ref{prop:3} and \ref{prop:4}. We are left to show statement~1 of the Main Theorem.

As we stated at the beginning of this Section, every cycles of $\Cay(P_n(2),D)$ have an even length and every such graph contains induced subgraph isomorphic to the cyclic graph $C_8$. Thus the girth of such graph is even and not greater then 8.  Example \ref{e:1} and Proposition \ref{prop:4} provides Cayley graphs of $P_n(2)$ on diagonal bases with girths equal to 4 and 8. Proposition \ref{prop:3} provides provides Cayley graphs of $P_n(2)$ on diagonal bases with girth equal to 6.

\section{Non-isomorphic Cayley graphs of $P_n(2)$ on diagonal bases.}

Throughout this Section we assume that $n\geq3$.

Let us recall diagonal bases of group $P_n(2)$ proposed in Example \ref{e:1}. Let $D=\{D_1,\ldots,D_n\}$ be a~diagonal base of group $P_n(2)$ such that $[D_1]_1=1$, $[D_n]_n=\overline{x_{n-1}}$ and for every $i=2,\ldots,n-1$ non-zero coordinates of tableaux $D_i$ are of form
$$[D_i]_i=\overline{x_{i-1}}+a_i,$$ where $a_i\in\mathbb{Z}_2$. From now on we will call such diagonal base of $P_n(2)$ as \textit{a $\delta$-base}.

\smallskip

Every $\delta$-base $D$ of $P_n(2)$ can be uniquely identified with a binary vector $a_D=[a_2,\ldots,a_{n-1}]$, called \textit{the characteristic vector} of a $\delta$-base $D$.

\smallskip

Let us notice that if $a_D=[0,0,\ldots,0]$, then from Proposition \ref{prop:4} the girth of $\Cay(P_n(2),D)$ is equal to 8  and if $a_D\neq[0,0,\ldots,0]$, then from Example \ref{e:1} the girth of $\Cay(P_n(2),D)$ is equal to 4. Thus the Cayley graph $\Cay(P_n(2),D)$, where $a_D=[0,\ldots,0]$ is not isomorphic with any Cayley graph $\Cay(P_n(2),D')$, where $a_{D'}\neq[0,\ldots,0]$. Moreover, the following Theorem holds.

\begin{theorem}\label{nis}
For different $\delta$-bases $D$ and $D'$ of $P_n(2)$ the Cayley graphs $\Cay(P_n(2),D)$ and $\Cay(P_n(2),D')$ are not isomorphic.
\end{theorem}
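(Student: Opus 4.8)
The plan is to read off the characteristic vector $a_D$ from a purely combinatorial invariant of the graph: for each generator, the number of other generators that commute with it. First I would pin down the commuting pairs of a $\delta$-base by feeding its special shape $[D_1]_1=1$, $[D_i]_i=\overline{x_{i-1}}+a_i$ for $2\le i\le n-1$, and $[D_n]_n=\overline{x_{n-1}}$ into Proposition~\ref{prop:1}. In every such pair the polynomial $g$ appearing in the decomposition of $[D_j]_j$ in condition $(\star)$ vanishes, so the odd case forces the (automatic) equality $g[D_i]_i=0$ while the even case forces the impossible $0=\overline{x_{j-1}}/x_i$. Consequently, for $1<i<j\le n$ the elements $D_i,D_j$ commute if and only if $a_i=1$, and $D_1$ commutes with no generator. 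Writing $T=\{\,i:2\le i\le n-1,\ a_i=1\,\}$, the pair $\{D_i,D_j\}$ with $i<j$ commutes exactly when $i\in T$.

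Next I would build the invariant. Since every element of $D$ is an involution and, as shown at the start of Section~3, every cycle has even length, the $4$-cycles of $\Cay(P_n(2),D)$ are precisely the squares $\{v,vD_i,vD_iD_j,vD_j\}$ arising from commuting pairs $\{D_i,D_j\}$. A direct count then shows that a fixed edge of ``colour'' $D_m$ lies in exactly $c_m$ four-cycles, where $c_m$ is the number of generators commuting with $D_m$, i.e. the degree of $D_m$ in the commuting graph $H$ on $\{D_1,\ldots,D_n\}$. Because the edge set splits into $n$ colour classes each of size $|P_n(2)|$, the multiset recording, over all edges, the number of $4$-cycles through that edge is a genuine isomorphism invariant equal to $\{c_1,\ldots,c_n\}$ with each value repeated $|P_n(2)|$ times. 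Thus the degree sequence of $H$ is reconstructible from the unlabelled Cayley graph. (The cruder ``total number of $4$-cycles'', proportional to $\sum_{i\in T}(n-i)$, is insufficient: for $n=6$ the sets $T=\{3\}$ and $T=\{4,5\}$ give the same sum, yet they are separated by the full degree sequence.)

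Finally I would identify $H$ as a threshold graph and recover $T$. Inserting the vertices in the order $D_n,D_{n-1},\ldots,D_1$, the rule ``$\{D_i,D_j\}$ commute iff $\min(i,j)\in T$'' shows that each new vertex $D_i$ is joined to all previously inserted vertices when $i\in T$ and to none otherwise; hence $H$ is a threshold graph whose creation sequence is $(a_{n-1},a_{n-2},\ldots,a_2)$ bordered by the forced isolated vertices $D_n$ (first) and $D_1$ (last). Two threshold graphs are isomorphic if and only if they share the same degree sequence (equivalently, the same creation sequence apart from its first entry). Since distinct $\delta$-bases differ in their characteristic vectors, and those differences occur away from that first entry, $D\neq D'$ forces $H$ and $H'$ to have different degree sequences, hence $\Cay(P_n(2),D)$ and $\Cay(P_n(2),D')$ to carry different $4$-cycle-per-edge invariants, and therefore to be non-isomorphic.

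I expect the main obstacle to be the middle step: making rigorous the passage from the abstract graph back to the commuting data. Concretely, one must verify that a colour-$D_m$ edge lies in precisely $c_m$ four-cycles with no over- or under-counting, so that the \emph{degree sequence} of $H$ (and not merely the total $4$-cycle count, which we saw is too coarse) is recoverable from the unlabelled graph. The remaining ingredients are either the commutation computation, which is routine given Proposition~\ref{prop:1}, or the standard fact that threshold graphs are determined up to isomorphism by their degree sequences.
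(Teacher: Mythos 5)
Your proof is correct, and its core coincides with the paper's: both arguments reduce everything to the observation that $4$-cycles correspond bijectively to commuting pairs of generators, and both ultimately separate two $\delta$-bases by showing that the multiset $\{c_1,\dots,c_n\}$ of ``number of generators commuting with $D_m$'' differs. Where you diverge is in the two supporting steps. First, the paper packages the invariant locally, via the ball $B_D(f,2)$ (counting, at a fixed vertex, how many incident edges lie in exactly $\theta$ four-cycles), whereas you package it globally as the multiset over all edges of per-edge $4$-cycle counts; these carry the same information, and your verification that a colour-$D_m$ edge lies in exactly $c_m$ four-cycles is the same computation the paper needs implicitly. Second, and more substantially, the paper proves that distinct characteristic vectors yield distinct degree multisets by a bare-hands argument: it first compares the totals $c_D=\sum_i a_i(n-i)$, and when these agree it takes the least index $k$ with $a_k\neq a_k'$ and shows by an explicit inequality that no generator $D_s'$ with $s\geq k$ can commute with exactly $\theta=n-k+\sum_{i<k}a_i$ others. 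You instead observe that the commuting graph is a threshold graph with creation sequence governed by $(a_{n-1},\dots,a_2)$ and invoke the standard fact that threshold graphs are determined by their degree sequences; this is slicker and makes the separation transparent, at the cost of importing an external theorem. One trivial slip: each colour class has $|P_n(2)|/2$ edges, not $|P_n(2)|$, but since all classes have equal size this does not affect the argument.
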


\begin{proof}
Let $D=\{D_1,\ldots,D_n\}$ and $D'=\{D_1',\ldots,D_n'\}$ be different $\delta$-bases of $P_n(2)$ with characteristic vectors $a_D=[a_2,\ldots,a_{n-1}]$ and $a_{D'}=[a_2',\ldots,a_{n-1}']$ respectively.

From Proposition \ref{prop:1} we get that if $a_j=0$ then $D_j$ do not commute with any $D_m$ for $m>j$, and if $a_j=1$ then $D_j$ commutes with every $D_m$ for $m>j$. Let us recall that $D_1$ do not commute with any other elements of $D$. Thus the number of pairs of commuting elements of $\delta$-base $D$ is equal to \begin{equation}\label{e:2}\displaystyle \sum\limits_{i=2}^{n-1}a_i(n-i).
\end{equation}
Further we denote the number of pairs of commuting elements of $\delta$-base $D$ by $c_D$.

Every tableau $f\in P_n(2)$ is a vertex of the Cayley graph $\Cay(P_n(2),D)$. Let us notice that the~vertex~$f$ belongs exactly to $c_D$ different cycles of length 4 (every pair of commuting generators $D_\alpha,\,D_\beta$ gives an unique cycle of length 4 of form $f\leftrightarrow fD_\alpha\leftrightarrow fD_\alpha D_\beta\leftrightarrow fD_\beta\leftrightarrow f$). Thus if $c_D\neq c_{D'}$, then Cayley graphs $\Cay(P_n(2),D)$ and $\Cay(P_n(2),D')$ are not isomorphic.

\smallskip

Now let us investigate the case $c_D=c_{D'}$.

Firstly, let us determine the form of ball $B_D(f,2)$ of the graph $\Cay(P_n(2),D)$. Let us notice that the vertex set $V(B_D)$ of ball $B_D(f,2)$ is equal to
$$V(B_D)=\left\{g\in P_n(2)\,|\,\exists(i,j\in\{1,\ldots,n\}):\,g=f\,\vee\,g=fD_i\,\vee\,g=fD_iD_j\right\}.$$
For some $i$ $(i=1,\ldots,n)$ let $f'=fD_i$. If element $D_i$ commutes with some other element of $\delta$-base $D$, then edge $ff'$ of graph $B_D(f,2)$ is contained in some cycle of length 4. On the other hand, if $D_i$ does not commute with any other element of $D$ in graph $B_D(f,2)$, then edge $ff'$ is not contained in any cycle and, hence, for every $D_j\neq D_i$ the path of form
$$f\leftrightarrow ff'\leftrightarrow ff'D_j$$
is not contained in any cycle. Thus the structure of induced subgraph $B_D(f,2)$ of graph $\Cay(P_n(2),D)$ is as follows:

Vertex $f$ is contained in $c_D$ number of cycles of length 4, and for every noncommuting element $D_i$ of $D$ vertex $f$ is contained in the path $$f\leftrightarrow fD_i\leftrightarrow fD_iD_j,$$ which is not contained in any cycle.

Examples of induced subgraphs $B_D(f,2)$ of $\Cay(P_5(2),D)$ are presented in Figure~\ref{fig1}.

\smallskip

Let us notice that the structure of $B_D(f,2)$ does not depend on the vertex $f$, so for any $g\in P_n(2)$ graph $B_D(g,2)$ is isomorphic to $B_D(f,2)$. Hence if graphs $B_D(f,2)$ and $B_{D'}(f,2)$ are not isomorphic, then also graphs $\Cay(P_n(2),D)$ and $\Cay(P_n(2),D')$ are not isomorphic.

\smallskip

We assumed that $c_D=c_D'$ and that bases $D$ and $D'$ are different. Thus vectors $a_D$ and $a_{D'}$ are also different. Let $k$ be the least integer such that $a_k\neq a_k'$. We may assume that $a_k=1$ and $a_k'=0$. Thus the number of elements of $D$ which commutes with $D_k$ is equal to $$n-k+\sum\limits_{i=2}^{k-1}a_i.$$ Let us denote that number by $\theta$. Let $\Theta$ be the number of generators $D_i$, where $i<k$,  which commute with exactly $\theta$ generators from $D$. Thus the number of all generators of $D$ which commute with exactly $\theta$ generators is not less then $\Theta+1$. On the other hand, from the fact that for all $i\leq k$ equality $a_i=a_i'$ holds, the number of generators of $D'$ which commutes with exactly $\theta$ other generators from $D'$ is equal to $\Theta$ (notice that there is no generator $D_s'$ for $s>k$, which commute with exactly $\theta$ other generators). Thus the number of edges of the form $f\leftrightarrow fD_i$ which are contained in exactly $\theta$ cycles of length 4 in graph $B_D(f,2)$ is not equal to the number of edges of the form $f\leftrightarrow fD_i'$ which are contained in exactly $\theta$ cycles of length 4 in graph $B_{D'}(f,2)$, and hence these graphs are not isomorphic.

%Thus the number of elements of $D$, which commutes with $D_k$ is equal to $$\displaystyle\sum\limits_{i=2}^{k-1}a_i$$ and the number of elements of $D'$, which commutes with $D_k'$ is equal to $$n-k+\sum\limits_{i=2}^{k-1}a_i,$$ because for all $i=2,\ldots,k-1$ equality $a_i=a_i'$ holds.

%Hence edge $f\leftrightarrow fD_k$ belongs to smaller number of cycles of length 4 in graph $B_D(f,2)$ then edge $f\leftrightarrow fD_k'$ in graph $B_{D'}(f,2)$ and for all $l>k$ edges $f\leftrightarrow fD_l$ and $f\leftrightarrow fD_l'$ belong to different number of cycles of length~4 in graphs $B_D(f,2)$ and $B_{D'}(f,2)$. So these graphs are not isomorphic and so, graphs $\Cay(P_n(2),D)$ and $\Cay(P_n(2),D')$ are not isomorphic.
\end{proof}

\begin{figure}[h]
    \centering
    \includegraphics[width=0.8\hsize]{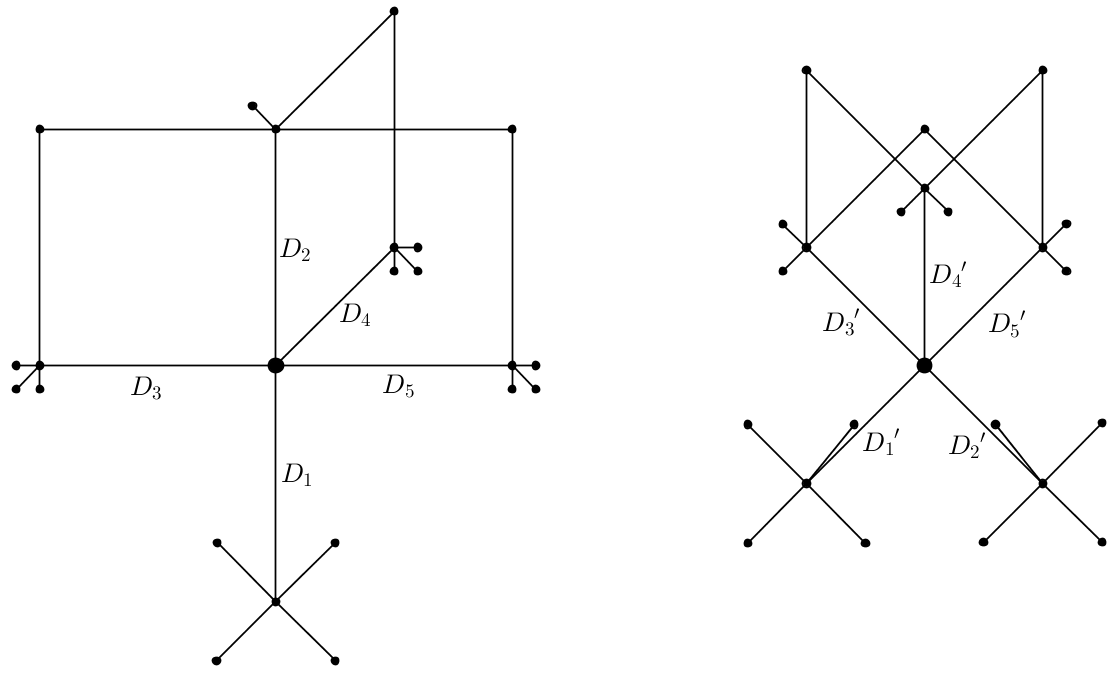}
    \caption{Graphs $B_D(f,2)$ and $B_{D'}(f,2)$ where $D$ and $D'$ are $\sigma$-bases with characteristic vectors $a_D=[1,0,0]$ and $a_{D'}=[0,1,1]$.}
    \label{fig1}
\end{figure}

Let us notice, that both bases $D$ and $D'$ from Figure \ref{fig1} have exactly three pairs of commuting elements, but graphs $B_D(f,2)$ and $B_{D'}(f,2)$ are not isomorphic.

\bigskip

From Theorem \ref{nis} the number of nonisomorphic Cayley graphs of $P_n(2)$ on the diagonal bases is at least equal to the number of all $\delta$-bases of this group. The number of all $\delta$-bases of $P_n(2)$ is equal to the number of all different characteristic vectors of those bases, so there holds the following

\begin{corollary}
The number of non-isomorphic Cayley graphs of $P_n(2)$ on diagonal bases is not less then $2^{n-2}$.
\end{corollary}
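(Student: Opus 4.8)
The plan is to read this off as a direct counting corollary of Theorem~\ref{nis}, so the work is essentially to set up the bijection between $\delta$-bases and their characteristic vectors and then count. First I would recall that, by the definition of a $\delta$-base, the tableaux $D_1$ and $D_n$ are completely determined ($[D_1]_1=1$ and $[D_n]_n=\overline{x_{n-1}}$), while each intermediate generator has the form $[D_i]_i=\overline{x_{i-1}}+a_i$ with $a_i\in\mathbb{Z}_2$ for $i=2,\ldots,n-1$. Hence the entire $\delta$-base is recovered from, and uniquely determined by, the binary string $a_D=[a_2,\ldots,a_{n-1}]$ of length $n-2$. This establishes a bijection between the set of $\delta$-bases of $P_n(2)$ and the set $\mathbb{Z}_2^{\,n-2}$ of characteristic vectors.

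Next I would count: since there are exactly $2^{n-2}$ distinct binary vectors of length $n-2$, there are exactly $2^{n-2}$ distinct $\delta$-bases of $P_n(2)$. Applying Theorem~\ref{nis} to every pair of distinct $\delta$-bases $D\neq D'$ shows that the corresponding Cayley graphs $\Cay(P_n(2),D)$ and $\Cay(P_n(2),D')$ are non-isomorphic. Consequently the family $\{\Cay(P_n(2),D):D\text{ a }\delta\text{-base}\}$ consists of $2^{n-2}$ pairwise non-isomorphic graphs.

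Finally I would note the containment step: every $\delta$-base is in particular a diagonal base of $P_n(2)$, so the collection of Cayley graphs arising from $\delta$-bases is a subcollection of all Cayley graphs arising from diagonal bases. Therefore the number of non-isomorphic Cayley graphs of $P_n(2)$ over all diagonal bases is at least the number arising from $\delta$-bases alone, which we have just shown to be $2^{n-2}$. This yields the stated lower bound.

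I do not expect any genuine obstacle here: all the substantive content has already been absorbed into Theorem~\ref{nis}, whose proof distinguishes the graphs via the invariant $c_D$ (the number of commuting pairs) and, when that coincides, via the finer local structure of the balls $B_D(f,2)$. The only point that merits a sentence of care is the length of the characteristic vector, namely that it ranges over indices $2\le i\le n-1$ and so has $n-2$ free coordinates rather than $n$ or $n-1$; getting that index range right is exactly what makes the exponent $n-2$ and matches the corollary.
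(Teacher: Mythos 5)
Your argument is correct and is essentially the paper's own: the paper likewise deduces the corollary by identifying $\delta$-bases with their characteristic vectors in $\mathbb{Z}_2^{\,n-2}$, counting $2^{n-2}$ of them, and invoking Theorem~\ref{nis} for pairwise non-isomorphism. Your explicit note that the free indices run over $2\le i\le n-1$ is a welcome (if minor) clarification of the exponent.
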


\end{document}